\documentclass[11pt,a4paper]{article}

\usepackage[english]{babel}
\usepackage{amsmath}
\usepackage{amsfonts}
\usepackage{amssymb}
\usepackage{graphicx}
\usepackage[square,numbers]{natbib}

\newtheorem{theorem}{Theorem}
\newtheorem{corollary}{Corollary}
\newtheorem{proof}{Proof}

\title{A new amplitude-frequency formula for non-conservative oscillators}

\author{Andrés García\\
        GIMAP (Grupo de Investigación en Multifísica Aplicada)\\
		Universidad Tecnológica Nacional-FRBB\\
		11 de Abril 461, Bahia Blanca, Buenos Aires, Argentina\\
		\texttt{andresgarcia@frbb.utn.edu.ar}} 

\begin{document}

\maketitle

\begin{abstract}
This paper formalize the existence's proof of first-integrals for any second order ODE, allowing to discriminate periodic orbits. Up to the author's knowledge, such a powerful result is not available in the literature providing a tool to determine periodic orbits/limit cycles in the most general scenario.
\end{abstract}

{\bf Keywords:} Periodic orbit, Non-conservative oscillator, Second order ODE.

\section{Introduction}

In a recent work (\cite{Garcia_2019_arxiv}) the existence of first computable integrals for periodic orbits was suggested. Moreover,  the application to the case reported by Mickens in \cite{Mickens_2005} is the initial key to develop a general amplitude-frequency formula for non-conservative oscillators.

The main purpose of this communication is the main theorem's improvement in \cite{Garcia_2019_arxiv}, removing a condition along with the proof's simplification, presenting  a new amplitude-frequency's formula for non-conservative oscillators. 

Up to the author's knowledge, no other available formula for non-conservative oscillators can be found. Amplitude-frequency formulas are exclusively developed for conservative oscillators: $\ddot{x(t)}=f(x(t))$.

\section{Amplitude-Frequency formula}

\begin{theorem}
 A second order ODE : $\ddot{x}(t)=f(x(t),\dot{x}(t)),\quad f\in\Re \times \Re\rightarrow \Re$ possess a periodic orbit: $\{x(0)=A\in \Re^{+},x(T)=A,\dot{x}(0)=0\}$, if and only if there exists a function  $\phi(x)\in \mathbb{C}^{1}(\Re)$, such that: $\frac{d\phi(x)}{dx}=\frac{f(x,\phi(x)}{\phi(x)},\quad \phi(A)=0$
 	
\end{theorem}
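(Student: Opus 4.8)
The plan is to prove both directions by reducing the second-order ODE to a first-order relationship between velocity and position. The central idea is to regard the velocity along a trajectory as a function of position, $\dot{x}=\phi(x)$, which is the standard phase-plane substitution. I would first establish what equation $\phi$ must satisfy if such a representation exists, and then show that the existence of a solution with the stated boundary condition $\phi(A)=0$ is equivalent to the orbit closing up periodically.

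Let me think about the direction. Suppose the ODE admits a periodic orbit with $x(0)=x(T)=A$ and $\dot{x}(0)=0$. I would argue that along the orbit one can write $\dot{x}=\phi(x)$ for a well-defined $\mathbb{C}^1$ function, at least on the relevant interval of $x$-values traversed by the orbit. Differentiating $\dot{x}=\phi(x)$ with respect to $t$ gives $\ddot{x}=\phi'(x)\dot{x}=\phi'(x)\phi(x)$, and since $\ddot{x}=f(x,\dot{x})=f(x,\phi(x))$, I obtain the claimed ODE $\phi'(x)=f(x,\phi(x))/\phi(x)$. The boundary condition $\phi(A)=0$ follows directly from $\dot{x}(0)=0$ together with $x(0)=A$.

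For the converse, suppose such a $\phi$ exists. The plan is to reconstruct the trajectory by integrating $\dot{x}=\phi(x)$ starting from $x(0)=A$, and to verify that the resulting $x(t)$ solves the original second-order ODE (by reversing the differentiation above) and returns to $A$ with zero velocity after some finite time $T$. Here I would use the turning-point structure: because $\phi(A)=0$, the point $A$ is a turning point of the motion, and I expect to need a symmetry or monotonicity argument to guarantee that the orbit actually closes rather than merely approaching $A$ asymptotically. The period would be recovered as $T=\oint dx/\phi(x)$ over the closed loop in the phase plane.

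The hard part, I anticipate, will be the well-definedness and regularity of $\phi$ in the forward direction, together with closure in the converse. The map $x\mapsto\dot{x}$ is single-valued only on a branch where $\dot{x}$ does not change sign, so on a genuine periodic orbit one typically needs \emph{two} branches (outgoing and returning), and $\phi$ need not be globally single-valued unless the orbit has a special symmetry. Moreover, at the turning point $\phi(A)=0$ the quotient $f(x,\phi(x))/\phi(x)$ is potentially singular, so establishing that $\phi\in\mathbb{C}^1$ near $x=A$ requires a careful l'Hôpital-type or regularity estimate. I would treat this delicately, as it is precisely where the claimed equivalence could fail without additional hypotheses on $f$; reconciling the single-valued $\phi$ with a two-branch orbit is the main obstacle I expect to confront.
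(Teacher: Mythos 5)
Your strategy is genuinely different from the paper's. The paper proves the direction ``periodic orbit $\Rightarrow$ existence of $\phi$'' by an asymptotic expansion of $f$, integration over one period, and the fundamental lemma of the calculus of variations, arriving at the Euler--Lagrange-type identity $\frac{d}{dt}\left(\frac{\partial f}{\partial \dot{x}}\right)+\frac{\partial f}{\partial x}=0$ and concluding from a first-variation argument that $f$ cannot actually depend on $\dot{x}$, i.e.\ the oscillator is conservative; the opposite direction is disposed of in one line by observing that $\ddot{x}=f(x,\phi(x))$ is a conservative oscillator. You instead work directly in the phase plane with the substitution $\dot{x}=\phi(x)$. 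That is a legitimate alternative strategy in principle, but as written your plan stalls exactly at the two places you yourself flag, and one of its steps is not merely delicate but fails outright.

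The fatal step is in your converse. You propose to reconstruct the trajectory ``by integrating $\dot{x}=\phi(x)$ starting from $x(0)=A$'' and to recover $T=\oint dx/\phi(x)$. But the hypotheses give $\phi\in\mathbb{C}^{1}(\Re)$ with $\phi(A)=0$, so $x=A$ is an equilibrium of that first-order equation, and by Picard--Lindel\"of uniqueness the only solution with $x(0)=A$ is $x(t)\equiv A$: your integration never leaves the turning point. Consistently, the period integral diverges rather than converges, since $|\phi(x)|\le L|x-A|$ near $A$ for a $\mathbb{C}^{1}$ function vanishing at $A$, making $1/\phi$ non-integrable there. (The paper's one-line argument sidesteps this by returning to the second-order equation $\ddot{x}=f(x,\phi(x))$ instead of the first-order reduction.) In the forward direction, the two obstacles you defer --- the two-branch multivaluedness of $\dot{x}$ as a function of $x$, and the regularity of $\phi$ at the turning point --- are not technicalities that a careful estimate will remove. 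At a genuine turning point of a non-stationary orbit one has $f(A,0)\neq 0$ (for locally Lipschitz $f$, otherwise $(A,0)$ is an equilibrium and the orbit through it is stationary), and then $\phi\,\phi'=f(x,\phi)$ forces $\phi(x)^{2}\approx 2|f(A,0)|\,|x-A|$ near $A$: the half-branch velocity has a square-root singularity, $|\phi'(x)|\to\infty$ as $x\to A$, so $\phi$ is continuous but never $\mathbb{C}^{1}$ at $A$, and no l'H\^opital-type argument can repair this. This is visible in the paper's own Mickens example, where $\phi(x)=\sqrt{e^{A^{2}-x^{2}}-1}$ has infinite slope at $x=A$. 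So your proposal is incomplete in one direction and relies on a false step in the other; the difficulties you anticipated are precisely where this phase-plane route breaks down, and resolving them would require changing the regularity demanded of $\phi$ in the statement itself.
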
 
 
\begin{proof}

\textbf{Necessity:}

If there exists $\phi(x)\in \mathbb{C}^{1}(\Re)$, such that: $\dot{x}(t)=\phi(x(t))$, then: $\ddot{x}(t)=f(x(t),\phi(x(t)))$. This new ODE is in fact a conservative oscillator (see for instance \cite{Landau_1982}, pp.29-34).

\textbf{Sufficiency:}

Performing an asymptotic expansion for $f(x,\dot{x})$ using an arbitrary bounded function $\zeta(t)\in \mathbb{C}^{1}(\Re)$ (see for instance \cite{De_Brujin_2010}):

	\begin{eqnarray*}
		f(x,\dot{x})\sim f(x+\eta \cdot \zeta,
		x+\eta \cdot \dot{\zeta}) + \frac{\partial f(x,\dot{x})}
		{\partial x}\vert_{\{x+\eta\cdot \zeta,\dot{x}+\eta\cdot 
		\dot{\zeta}\}} \cdot \left[x-(x+\eta\cdot \zeta)\right]+\\
		+ \frac{\partial f(x,\dot{x})}
		{\partial \dot{x}}\vert_{\{\dot{x}+\eta\cdot \dot{\zeta},
		\dot{x}+\eta\cdot \dot{\zeta}\}} \cdot \left[\dot{x}-
		(\dot{x}+\eta\cdot \dot{\zeta})\right], \quad (\eta 
		\rightarrow 0)	
	\end{eqnarray*}

Integrating and taking into account the hypothesis of periodic orbits existence:

	\begin{eqnarray*}
		\int_{0}^{T} f(x,\dot{x}) \cdot dt=0\sim \int_{0}^{T} f(x+
		\eta \cdot \zeta,	x+\eta \cdot \dot{\zeta}) \cdot dt + 
		\int_{0}^{T} \frac{\partial f(x,\dot{x})}{\partial x}
		\vert_{\{x+\eta\cdot \zeta,\dot{x}+\eta\cdot \dot{\zeta}\}} 
		\cdot \\
		\cdot \left[x-(x+\eta\cdot \zeta)\right] \cdot dt+
		\int_{0}^{T} \frac{\partial f(x,\dot{x})}
		{\partial \dot{x}}\vert_{\{\dot{x}+\eta\cdot \dot{\zeta},
		\dot{x}+\eta\cdot \dot{\zeta}\}} \cdot \left[\dot{x}-
		(\dot{x}+\eta\cdot \dot{\zeta})\right] \cdot dt=0, \quad 
		(\eta \rightarrow 0)	
	\end{eqnarray*}

Equivalently:

	\begin{equation*}
		\int_{0}^{T} \eta \cdot \zeta(t) \cdot \left[\frac{d}{dt} 
		\left(\frac{\partial f(x,\dot{x})}{\partial \dot{x}}\right)+
		\frac{\partial  f(x,\dot{x})}{\partial x} \right]=0,\quad 
		(\eta\rightarrow 0, \forall \zeta(t)\in \mathbb{C}^{1}(\Re))	
	\end{equation*}

Applying the fundamental lemma of calculus of variations for a test function $\eta \cdot \zeta(t)$(see for instance \cite{Gelfand_1963}, pp. 9) :

	\begin{equation}\label{Condition extrema}
		\frac{d}{dt} \left(\frac{\partial f(x,\dot{x})}{\partial 
		\dot{x}}\right)+\frac{\partial  f(x,\dot{x})}{\partial x} 
		=0			
	\end{equation}

This equation is a necessary condition for the first variation of the following functional:

	\begin{eqnarray*}
		min_{x(t)\in\Re} \underbrace{\int_{0}^{t} f(x(\sigma,
		\dot{\sigma})\cdot d\sigma}_{J} \\
		\text{such that:}\\
		\begin{cases}
			\ddot{x}=f(x,\dot{x})\\
			x(0)=A
		\end{cases}
	\end{eqnarray*}

In summary, the first variation $\delta J=0$ for every periodic trajectory $\{x=\zeta_{1}(t,A,\dot{x}(0)),\dot{x}=\zeta_{2}(t,A,\dot{x}(0)) \}$of $\ddot{x}=f(x,\dot{x})$, let's say: $\frac{\partial}{\partial \dot{x}(0)} \int_{0}^{t} f(\zeta_{1}(t,A,\dot{x}(0),\zeta_{2}(t,A,\dot{x}(0))\cdot d\sigma=0,\quad \forall t \in [0,T]$, then:

	\begin{equation*}
		\frac{\partial}{\partial \dot{x}(0)} f(\zeta_{1}(t,A,
		\dot{x}(0),\zeta_{2}(t,A,\dot{x}(0))=0,\quad \forall 
		t\in [0,T]
	\end{equation*}

This conclusion shows that actually: $f(x,\dot{x})=f(x)$, in other words, a conservative oscillator with $\phi(x)\in \mathbb{C}^{1}(\Re)$, such that: $\dot{x}(t)=\phi(x(t))$. This completes the proof.

\end{proof}

Finally, specializing this result to oscillators: $\ddot{x}(t)=
f_{1}(x)\cdot f_{2}(\dot{x}(t))$:

\begin{corollary}
A second order ODE : $\ddot{x}(t)=f_{1}(x(t)) \cdot f_{2}(\dot{x}(t))$ possess periodic orbits: $\{x(0)=A\in \Re^{+},x(T)=A,\dot{x}(0)=0\}$, with an amplitude-period formula: 
	
	\begin{eqnarray*}
		\begin{cases}
			\int_{0}^{\phi(x)} \frac{\phi}{f_{2}(\phi)} \cdot d\phi=
			\int_{A}^{x} f_{1}(x) \cdot dx\\
			T=4 \cdot \int_{0}^{A} \frac{1}{\phi(x)}\cdot dx
		\end{cases}
	\end{eqnarray*}
	
\end{corollary}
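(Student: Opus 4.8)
The plan is to invoke the main theorem directly with the factorized right-hand side $f(x,\dot{x}) = f_1(x)\cdot f_2(\dot{x})$. By the theorem, a periodic orbit through $\{x(0)=A,\ \dot{x}(0)=0\}$ exists precisely when there is a function $\phi\in\mathbb{C}^{1}(\Re)$ with $\dot{x}=\phi(x)$ satisfying
\begin{equation*}
\frac{d\phi}{dx}=\frac{f_1(x)\cdot f_2(\phi)}{\phi},\qquad \phi(A)=0.
\end{equation*}
The key observation is that, because $f$ factorizes, this first-order ODE is \emph{separable}, and hence can be integrated by quadrature; this is precisely the simplification the factorized form affords over the general (non-separable) equation of the theorem.

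First I would separate variables, rewriting the governing relation as $\frac{\phi}{f_2(\phi)}\,d\phi = f_1(x)\,dx$. Integrating both sides along the orbit and fixing the constant of integration by the boundary condition $\phi(A)=0$ (so that the lower limits on the two sides correspond to the turning point $x=A$, $\phi=0$) yields
\begin{equation*}
\int_{0}^{\phi(x)} \frac{\phi}{f_2(\phi)}\,d\phi = \int_{A}^{x} f_1(x)\,dx,
\end{equation*}
which is the first line of the claimed formula. This relation implicitly defines $\phi(x)$, i.e. the velocity $\dot{x}$ as a function of position along the closed orbit.

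Next I would recover the period from the phase relation $\dot{x}=\phi(x)$, which gives $dt = dx/\phi(x)$. Assuming the orbit is symmetric about the equilibrium so that the motion sweeps $x\in[-A,A]$ and the four quarter-arcs contribute equally, the time to traverse one quarter-period from $x=0$ to $x=A$ is $\int_{0}^{A} dx/\phi(x)$; multiplying by $4$ then produces $T = 4\int_{0}^{A}\frac{1}{\phi(x)}\,dx$, the second line of the formula.

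The hard part will be the justification of the factor $4$: this requires establishing the symmetry of the closed orbit and carefully tracking the sign of $\phi$, which is positive on the ascending arc and negative on the descending arc. One must also verify convergence of the period integral near the turning point $x=A$, where $\phi(A)=0$ renders the integrand $1/\phi$ singular. A mild nondegeneracy condition (with $f_2(0)\neq 0$ and $f_1(A)\neq 0$) forces, via the first formula, the local behaviour $\phi(x)\sim c\,(A-x)^{1/2}$, so that $1/\phi\sim (A-x)^{-1/2}$ remains integrable and $T$ is finite.
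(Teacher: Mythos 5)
Your proposal matches the paper's own (implicit) derivation: the paper offers no written proof of the corollary, presenting it simply as the specialization of the main theorem to $f(x,\dot{x})=f_{1}(x)\cdot f_{2}(\dot{x})$, whereupon the condition $\frac{d\phi}{dx}=\frac{f_{1}(x)\,f_{2}(\phi)}{\phi}$, $\phi(A)=0$ becomes separable and integrates to the first displayed relation, with the period then read off from $dt=dx/\phi(x)$ --- exactly your route. If anything you are more careful than the paper, which never mentions the orbit-symmetry assumption underlying the factor $4$ or the integrability of $1/\phi$ near the turning point $x=A$, both of which you correctly flag as the points needing justification.
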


\subsection{Mickens' oscillator}

Applying the corollary to Mickens' oscillator: $\ddot{x}=-x\cdot(1+\dot{x}^{s})$, then: $T=4 \cdot \int_{0}^{A} \frac{dx}{\sqrt{e^{A^{2}-x^{2}}-1}}$. Notice that this formula, is not more than the exact amplitude-frequency formula obtained in \cite{Mickens_2005}.

\section{Conclusions}

In this paper a novel first computable integral to reduce a second order non-conservative oscillator: $\dot{x}=f(x,\dot{x})$ to a conservative one: $\dot{x}=f(x,\phi(x))$ is proved.

Up to the author's knowledge, this result has never been obtained, with the possibility to count for periodic orbits computing a reduced first order ODE: $\dot{x}=\phi(x)$.

The specialization of the results in this paper to ODE's: $\ddot{x}(t)=f_{1}(x(t)) \cdot f_{2}(\dot{x}(t))$ provides an amplitude-period formula for non-conservative oscillators.

\section*{Acknowledgments}
This work is supported by Universidad Tecnológica Nacional.

\end{document}